\def\N{\mathbb{N}}
\def\R{\mathbb{R}}
\def\HH{\mathcal{H}}
\def\M{\mathcal{M}}
\def\O{\Omega}
\def\eps{\varepsilon}
\def\ds{\displaystyle}
\newcommand{\dx}{\,dx}
\newcommand{\dH}{\,d\mathcal{H}^{d-1}}
\newcommand{\be}{\begin{equation}}
\newcommand{\ee}{\end{equation}}
\newcommand{\bib}[4]{\bibitem{#1}{\sc#2: }{\it#3. }{#4.}}
\numberwithin{equation}{section}
\theoremstyle{plain}
\newcommand{\mres}{\mathbin{\vrule height 1.6ex depth 0pt width
0.13ex\vrule height 0.13ex depth 0pt width 1.3ex}}
\newtheorem{theo}{Theorem}[section]
\newtheorem{lemm}[theo]{Lemma}
\newtheorem{prop}[theo]{Proposition}
\theoremstyle{definition}
\newtheorem{rema}[theo]{Remark}
\title[Eigenvalue and torsion with Robin conditions]{Relations between principal eigenvalue and torsional rigidity with Robin boundary conditions}
\author[G. Buttazzo]{Giuseppe Buttazzo}
\author[S. Cito]{Simone Cito}
\author[F. Solombrino]{Francesco Solombrino}
\date{\today}
\begin{document}

\maketitle


\begin{abstract}
We consider the torsional rigidity and the principal eigenvalue related to the Laplace
operator with Dirichlet and Robin boundary conditions. The goal is to find upper and lower bounds to products of suitable powers of the quantities above in the class of Lipschitz domains. The threshold exponent for the Robin case is explicitly recovered and shown to be strictly smaller than in the Dirichlet one.
\end{abstract}

\textbf{Keywords:} Robin boundary conditions; principal eigenvalue; torsional rigidity; shape functionals

\textbf{2020 Mathematics Subject Classification:} 49Q10, 49J45, 49R05, 35P15, 35J25

\section{Introduction}\label{sintro}

Elliptic boundary value problems involving Robin boundary conditions share many structural and analytical similarities with those formulated under Dirichlet boundary conditions. Nevertheless, the Robin case also exhibits distinct and subtle phenomena that are of considerable mathematical and physical interest. In the present article, we focus on exploring the relations between two fundamental quantities associated with the Laplace operator under Robin conditions: the principal (or first) eigenvalue and the torsional rigidity of a domain.

Throughout the paper, we denote by $\O$ an open bounded subset of $\R^d$ with a sufficiently regular boundary (for instance of Lipschitz class). Its Lebesgue measure is denoted by $|\O|$, and without loss of generality we shall assume that $|\O|=1$ in order to simplify the presentation, 

Let $\beta>0$ be the Robin parameter, the {\it first eigenvalue} $\lambda_\beta(\O)$ of the Laplacian with Robin boundary conditions is defined as the smallest $\lambda$ for which the boundary value problem
$$\begin{cases}
-\Delta u=\lambda u&\text{in }\O\\
\partial_\nu u+\beta u=0&\text{on }\partial\O
\end{cases}$$
admits a nontrivial solution, where $\partial_\nu$ denotes the outward normal derivative on $\partial\O$. Equivalently, $\lambda_\beta(\O)$ can be characterized variationally as the minimum of the associated Rayleigh quotient
\be\label{reigen}
\lambda_\beta(\O)=\min\Bigg\{\frac{\int_\O|\nabla u|^2\,dx+\beta\int_{\partial\O}u^2\,d\HH^{d-1}}{\int_\O u^2\,dx}\ :\ u\in H^1(\O)\setminus\{0\}\Bigg\},
\ee
where $\HH^{d-1}$ denotes the Hausdorff $d-1$ dimensional measure.

The second quantity of interest is the {\it torsional rigidity} $T_\beta(\O)$ which, in the context of elasticity theory, represents the resistance of the domain $\O$ to torsion under boundary conditions of Robin type. It is defined by $$T_\beta(\O)=\int_\O w_\beta\,dx,$$
where $w_\beta$ denotes the unique weak solution of the Poisson problem
$$\begin{cases}
-\Delta u=1&\text{in }\O\\
\partial_\nu u+\beta u=0&\text{on }\partial\O.
\end{cases}$$
We refer to \cite{VDBB14} for an extended presentation of problems involving the Robin torsional rigidity. Equivalently, $T_\beta(\O)$ admits the variational representation
\be\label{rtorsi}
T_\beta(\O)=\max\Bigg\{\frac{\big(\int_\O u\,dx\big)^2}{\int_\O|\nabla u|^2\,dx+\beta\int_{\partial\O}u^2\,d\HH^{d-1}}\ :\ u\in H^1(\O)\setminus\{0\}\Bigg\}.
\ee

When $\beta=+\infty$ the Robin condition degenerates into the Dirichlet condition
$u=0$ on $\partial\O$. In this limit, the corresponding quantities $\lambda_\infty(\O)$ and $T_\infty(\O)$ reduce to the classical Dirichlet principal eigenvalue and Dirichlet torsional rigidity respectively, and the minimization problems \eqref{reigen} and \eqref{rtorsi} are naturally restricted to the Sobolev space $H^1_0(\O)$. It is well known (see for instance \cite[Proposition 4.5]{BFK} for the eigenvalue and \cite[Theorem 3.1]{BO25} with $f\equiv1$ for the torsional rigidity) that
\be\label{infty}
\lim_{\beta\to+\infty}\lambda_\beta(\O)=\lambda_\infty(\O)\qquad\text{and}\qquad\lim_{\beta\to+\infty}T_\beta(\O)=T_\infty(\O).
\ee

Our main goal is to investigate shape optimization problems involving the interaction between these two fundamental quantities. For an overview on shape optimization problems we refer to the books \cite{BB05}, \cite{he06}, \cite{HP18}. Specifically, for a fixed $\beta>0$ and a given real exponent $q>0$ we consider shape functionals of the form
\be\label{fbq}
F_{\beta,q}(\O)=\lambda_\beta(\O)T^q_\beta(\O).
\ee
We are particularly interested in determining the extremal values of the functional $F_{\beta,q}$ among all domains $\O$ of fixed measure: 
\be\label{mMbq}
\begin{cases}
m_{\beta,q}=\inf\big\{F_{\beta,q}(\O)\ :\ |\O|=1\big\}\\
M_{\beta,q}=\sup\big\{F_{\beta,q}(\O)\ :\ |\O|=1\big\}.
\end{cases}\ee
In the following, since the Robin parameter $\beta$ is fixed, we often omit it from the notation and simply write $F_q$, $m_q$, $M_q$.

The Dirichlet case corresponding to $\beta=+\infty$ has been extensively analyzed in \cite{BBG24} and \cite{BBP22} (see also \cite{VDBBP21}, \cite{VDBBV15}). In the present work, we turn our attention to the more delicate situation where $\beta<+\infty$, and we highlight an interesting phenomenon that emerges in this regime. Indeed, while it is known that $m_{\infty,q}=0$ if and only if $q>2/(d+2)$, we establish that $m_{\beta,q}=0$ if and only if $q>1/(d+1)$, independently of the value of $\beta$, in spite of the asymptotic behavior described in \eqref{infty}.

As for the Dirichlet boundary condition, the case $q=1$ deserves particular attention. Indeed, we prove that $M_1=1$ by carrying out a detailed analysis of the asymptotic behavior of solutions in appropriately and finely perforated domains. This result is similar to what is known in the Dirichlet setting. However, in the Robin case the characteristic size of the perforations is different, which leads to a different scaling, even though the final conclusion remains the same.

The paper is organized as follows. In Section \ref{spreli}, we recall some preliminary analytical results and technical tools that will be employed in the subsequent analysis. Section \ref{sdiric} is devoted to the Dirichlet case $\beta=+\infty$, for which several sharp results are already known; we summarize and reinterpret them in a unified framework. In Section \ref{srobin}, we turn to the Robin case $\beta<+\infty$, which presents new challenges and a richer phenomenology. Finally, in Section \ref{sopenq} we conclude by outlining a number of open problems and conjectures that, in our view, deserve further investigation.

\section{Preliminary results}\label{spreli}

If we consider $\lambda_\beta$ or $T_\beta$ only, the related optimization problems are well-known and we have the following results. Here and in the following we denote by $B$ a ball in $\R^d$ with unitary Lebesgue measure.

\begin{itemize}
\item We have
$$\lambda_\infty(B)\le\lambda_\infty(\O)\qquad\text{for every $\O$ with }|\O|=1.$$
This inequality is known as {\it Faber-Krahn inequality}.

\item We have
$$T_\infty(B)\ge T_\infty(\O)\qquad\text{for every $\O$ with }|\O|=1.$$
This inequality is known as {\it Saint-Venant inequality}.

\item We have for every $\beta<+\infty$
$$\lambda_\beta(B)\le\lambda_\beta(\O)\qquad\text{for every $\O$ with }|\O|=1.$$
This inequality was proved in \cite{Bo86} in the case $d=2$ and in \cite{Da06} for any dimension $d$. In \cite{BG15} the same result is obtained by means of a free discontinuity approach in spaces of functions of bounded variation.

\item We have for every $\beta<+\infty$
$$T_\beta(B)\ge T_\beta(\O)\qquad\text{for every $\O$ with }|\O|=1.$$
This inequality was proved in \cite{BG15}.

\end{itemize}

The main difference between the Dirichlet case $\beta=+\infty$ and the Robin case $\beta<+\infty$ is in the scaling properties. We have for every scaling factor $t>0$
$$\begin{cases}
\lambda_\infty(t\O)=t^{-2}\lambda_\infty(\O)\\
T_\infty(t\O)=t^{d+2}T_\infty(\O),
\end{cases}$$
while, if $\beta<+\infty$ we have
\begin{equation}\label{eq: scaling}
\begin{cases}
\lambda_\beta(t\O)=t^{-2}\lambda_{t\beta}(\O)\\
T_\beta(t\O)=t^{d+2}T_{t\beta}(\O),
\end{cases}
\end{equation}
An easy comparison, by using either $H^1_0(\O)$ or constant test functions, gives for every $\O$
\begin{equation*}
\begin{cases}
\ds\lambda_\beta(\O)\le\lambda_\infty(\O)\wedge\beta\frac{\mathcal{H}^{d-1}(\partial\O)}{|\O|}\\
\ds T_\beta(\O)\ge T_\infty(\O)\vee\frac{|\O|^2}{\beta\mathcal{H}^{d-1}(\partial\O)}.
\end{cases}
\end{equation*}
In particular, for a ball $B_r$ of radius $r$ we have
$$\begin{cases}
\lambda_\beta(B_r)\le r^{-2}\lambda_\infty(B_1)\wedge r^{-1}\beta d\\
\ds T_\beta(B_r)\ge r^{d+2}T_\infty(B_1)\vee r^{d+1}\frac{\omega_d}{d},
\end{cases}$$
where $\omega_d$ denotes the Lebesgue measure of the unit ball in $\R^d$. Some more precise estimates hold.

\begin{prop}\label{estim}
For every $r>0$ we have:
\be\label{lbr}
\frac{\beta}{4r(1+\beta r)}\le\lambda_\beta(B_r)\le\frac{C_d\beta}{r(1+\beta r)}
\ee
where $C_d$ is a constant depending only on the dimension $d$, and
\be\label{tbr}
T_\beta(B_r)=\frac{\omega_d}{d}\Big(\frac{r^{d+1}}{\beta}+\frac{r^{d+2}}{d+2}\Big).
\ee
\end{prop}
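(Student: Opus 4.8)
The plan is to prove the two assertions separately, deriving the explicit torsion function first since it also yields the lower eigenvalue bound. For \eqref{tbr}: by uniqueness and the rotational symmetry of $B_r$ the torsion function $w_\beta$ is radial, and since the radial solutions of $-\Delta u=1$ regular at the origin are $u(x)=c-|x|^2/(2d)$, we have $w_\beta(x)=c-|x|^2/(2d)$ for some constant $c$. On $\partial B_r$ one has $\partial_\nu w_\beta=-r/d$, so the Robin condition $\partial_\nu w_\beta+\beta w_\beta=0$ forces $c=\frac{r}{\beta d}+\frac{r^2}{2d}$. Then $T_\beta(B_r)=\int_{B_r}w_\beta\,dx=c\,\omega_d r^d-\frac1{2d}\int_{B_r}|x|^2\,dx$, and inserting $\int_{B_r}|x|^2\,dx=\frac{d\,\omega_d}{d+2}r^{d+2}$ and collecting the $r^{d+2}$ terms gives \eqref{tbr}; this is a routine computation.

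For the lower bound in \eqref{lbr} I would use the standard duality between the first eigenvalue and the torsion function. Let $u_1$ be a first Robin eigenfunction on $B_r$; replacing it by $|u_1|$ we may assume $u_1\ge0$, $u_1\not\equiv0$. Testing the weak formulation of $-\Delta u_1=\lambda_\beta(B_r)u_1$ against $w_\beta$, and the weak formulation of $-\Delta w_\beta=1$ against $u_1$, both produce the common quantity $\int_{B_r}\nabla u_1\cdot\nabla w_\beta\,dx+\beta\int_{\partial B_r}u_1w_\beta\dH$, so that
$$\lambda_\beta(B_r)\int_{B_r}u_1w_\beta\,dx=\int_{B_r}u_1\,dx>0.$$
Since $u_1\ge0$ and $w_\beta\le\|w_\beta\|_{L^\infty(B_r)}=w_\beta(0)=\frac{r}{\beta d}+\frac{r^2}{2d}$, we get $\lambda_\beta(B_r)\ge 1/w_\beta(0)=\frac{2d\beta}{r(2+\beta r)}$, and since $8d(1+\beta r)\ge 2+\beta r$ for all $d\ge1$ and $\beta,r>0$, this is $\ge\frac{\beta}{4r(1+\beta r)}$, as claimed (in fact with a wide margin, so the constant $1/4$ is far from sharp).

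For the upper bound in \eqref{lbr} it suffices to use the estimate $\lambda_\beta(B_r)\le r^{-2}\lambda_\infty(B_1)\wedge r^{-1}\beta d$ already recorded before the statement (here $B_1$ is the unit ball and $\lambda_\infty(B_1)$ is a dimensional constant). If $\beta r\le1$ then $1+\beta r\le2$, so $r^{-1}\beta d\le\frac{2d\beta}{r(1+\beta r)}$; if $\beta r\ge1$ then $1+\beta r\le2\beta r$, so $r^{-2}\lambda_\infty(B_1)\le\frac{2\lambda_\infty(B_1)\beta}{r(1+\beta r)}$. In either case \eqref{lbr} holds with $C_d=2\max\{d,\lambda_\infty(B_1)\}$. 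None of this presents a real obstacle; the one step requiring a bit of care is the duality identity in the lower bound, which must be applied with $H^1(B_r)$ test functions---not $H^1_0(B_r)$---since the Robin boundary terms do not vanish, but this is classical.
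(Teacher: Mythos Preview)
Your argument is correct. The paper itself does not give a proof of \eqref{lbr}: it simply refers to \cite{Ko14} for both inequalities and notes that \eqref{tbr} is a direct computation. Your derivation of \eqref{tbr} is exactly that computation, and for \eqref{lbr} you supply a self-contained elementary argument in place of the citation. The lower bound via the duality identity $\lambda_\beta(B_r)\int u_1 w_\beta\,dx=\int u_1\,dx$, yielding $\lambda_\beta(B_r)\ge 1/\|w_\beta\|_{L^\infty}$, is a clean substitute for whatever is done in \cite{Ko14}; the upper bound simply reorganizes the two crude comparisons $\lambda_\beta(B_r)\le r^{-2}\lambda_\infty(B_1)$ and $\lambda_\beta(B_r)\le\beta d/r$ already recorded before the proposition. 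So your route is genuinely more elementary and fully internal to the paper, at the cost of a slightly worse implicit constant in the upper bound---which is immaterial since only the order of magnitude is used later.
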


\begin{proof}
Estimates \eqref{lbr} can be found in \cite{Ko14}, while equality \eqref{tbr} comes by a direct computation in polar coordinates.
\end{proof}

In the following we need a variant of Gagliardo-Nirenberg inequality for functions in $H^1(\O)$; we recall the inequality below, as obtained in \cite{W02}.

\begin{theo}\label{warma}
For every open bounded subset $\O$ of $\R^d$ with a Lipschitz boundary there exists a constant $C>0$ such that
$$\int_\O u^2\,dx\le C\bigg[\int_\O|\nabla u|^2\,dx+\int_{\partial\O}u^2\,d\HH^{d-1}\bigg]^{d/(d+1)}\bigg[\int_\O|u|\,dx\bigg]^{2/(d+1)}$$
for all $u\in H^1(\O)\cap L^1(\O)$. Moreover, the constant $C$ depends only on $d$ and on $|\O|$.
\end{theo}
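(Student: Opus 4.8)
The plan is to deduce the inequality from the Sobolev (isoperimetric) inequality in $\R^d$ applied to the zero-extension of $u^2$, combined with Cauchy--Schwarz, an elementary interpolation between Lebesgue norms, Young's inequality, and the Faber--Krahn inequality for $\lambda_\beta$ recalled in Section~\ref{spreli}.

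First I would reduce to $u\ge 0$ (replacing $u$ by $|u|$ changes no term) and assume the trace of $u$ lies in $L^2(\partial\O)$, the inequality being trivial otherwise. Since $u\in H^1(\O)\cap L^1(\O)$ one has $u^2\in W^{1,1}(\O)$, because $\nabla(u^2)=2u\nabla u\in L^1(\O)$ by Cauchy--Schwarz; as $\O$ is Lipschitz, the extension $v$ of $u^2$ by $0$ outside $\O$ belongs to $BV(\R^d)$ with
\[
|Dv|(\R^d)=2\int_\O u|\nabla u|\dx+\int_{\partial\O}u^2\dH .
\]
Applying the Sobolev inequality $\|v\|_{L^{d/(d-1)}(\R^d)}\le c_d\,|Dv|(\R^d)$ for $BV$ functions, and noting $\|v\|_{L^{d/(d-1)}(\R^d)}=\|u\|_{L^{2d/(d-1)}(\O)}^2=:N$, one gets
\[
N\le c_d\Big(2\int_\O u|\nabla u|\dx+\int_{\partial\O}u^2\dH\Big)\le c_d\Big(2\|u\|_{L^2(\O)}\|\nabla u\|_{L^2(\O)}+\int_{\partial\O}u^2\dH\Big),
\]
the last step again by Cauchy--Schwarz.

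Next, since $1\le 2\le 2d/(d-1)$, the interpolation inequality $\|u\|_{L^2(\O)}\le\|u\|_{L^1(\O)}^{1/(d+1)}\|u\|_{L^{2d/(d-1)}(\O)}^{d/(d+1)}=\|u\|_{L^1(\O)}^{1/(d+1)}N^{d/(2(d+1))}$ holds. Inserting it into the right-hand side above and absorbing the resulting power of $N$ through Young's inequality with conjugate exponents $\tfrac{2(d+1)}{d}$ and $\tfrac{2(d+1)}{d+2}$, one obtains
\[
N\le C_d\Big(\|u\|_{L^1(\O)}^{2/(d+2)}\|\nabla u\|_{L^2(\O)}^{2(d+1)/(d+2)}+\int_{\partial\O}u^2\dH\Big).
\]
Now I would use that the Faber--Krahn inequality for $\lambda_\beta$, together with the scaling relations \eqref{eq: scaling} (take Robin parameter $1$ and compare with the ball of volume $|\O|$), yields a constant $c(d,|\O|)>0$ with $\int_\O|\nabla u|^2\dx+\int_{\partial\O}u^2\dH\ge c(d,|\O|)\int_\O u^2\dx$ for every $u\in H^1(\O)$. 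Writing $D$ for this left-hand side and using $\|u\|_{L^1(\O)}^2\le|\O|\,\|u\|_{L^2(\O)}^2\le(|\O|/c(d,|\O|))D$, $\|\nabla u\|_{L^2(\O)}^2\le D$ and $\int_{\partial\O}u^2\dH\le D$, each term in the last bound for $N$ is controlled by a constant depending only on $d$ and $|\O|$ times $D$; hence $N\le C(d,|\O|)D$.

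Finally, applying the interpolation inequality once more,
\[
\int_\O u^2\dx=\|u\|_{L^2(\O)}^2\le\|u\|_{L^1(\O)}^{2/(d+1)}N^{d/(d+1)}\le C(d,|\O|)^{d/(d+1)}\Big(\int_\O|u|\dx\Big)^{2/(d+1)}D^{d/(d+1)},
\]
which is the assertion. The exponent bookkeeping in the interpolation and Young steps is routine; the substantive ingredients are the identification of $|Dv|(\R^d)$ for the zero-extension of $u^2$ (which produces the boundary term and the exponent $d/(d+1)$ at once) and the Poincar\'e-type lower bound from Faber--Krahn, and the point to watch is that every constant introduced --- the dimensional $c_d$, the Young constants, and the Faber--Krahn value --- depends only on $d$ and $|\O|$, which gives the stated dependence of $C$. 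For $d=1$ the argument is the same with $L^{2d/(d-1)}$ replaced by $L^\infty$.
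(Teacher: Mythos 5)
The paper offers no proof of this statement: it simply cites Proposition 4.2.9 of Warma's thesis \cite{W02}. Your argument is therefore a self-contained alternative, and upon checking it is correct. The decisive idea is the right one: extending $u^2$ by zero so that the $BV$ total variation of the extension produces the gradient term and the boundary trace term simultaneously, and then invoking the dimensionally sharp $BV$--Sobolev inequality $\|v\|_{L^{d/(d-1)}}\le c_d|Dv|(\R^d)$, whose constant is independent of the domain. The interpolation exponent $\theta=1/(d+1)$ between $L^1$ and $L^{2d/(d-1)}$, and the Young exponents $\tfrac{2(d+1)}{d}$ and $\tfrac{2(d+1)}{d+2}$ used to absorb $N^{d/(2(d+1))}$, all check out, as does the final bookkeeping. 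One point worth making explicit is that the absorption of the $N$-term requires $N<+\infty$ a priori; this holds because $H^1(\O)\hookrightarrow L^{2d/(d-1)}(\O)$ for bounded Lipschitz $\O$.

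The one place where the route seems heavier than necessary is the appeal to the Bossel--Daners Faber--Krahn inequality. This is not circular (Faber--Krahn is established independently of any Gagliardo--Nirenberg estimate), but it is a much deeper result than the one you are proving, and you can avoid it entirely. From $N\le c_d\big(2\|u\|_{L^2}\|\nabla u\|_{L^2}+\int_{\partial\O}u^2\,d\HH^{d-1}\big)$ together with the H\"older bound $\|u\|_{L^2(\O)}^2\le|\O|^{1/d}\,N$, the weighted Young inequality $2\|u\|_{L^2}\|\nabla u\|_{L^2}\le\eps|\O|^{1/d}N+\eps^{-1}\|\nabla u\|_{L^2}^2$ with $\eps=(2c_d|\O|^{1/d})^{-1}$ already yields $N\le C(d,|\O|)\big(\int_\O|\nabla u|^2\,dx+\int_{\partial\O}u^2\,d\HH^{d-1}\big)$ directly, with no interpolation or Faber--Krahn needed at this stage. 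Plugging this into your final interpolation step $\|u\|_{L^2}^2\le\|u\|_{L^1}^{2/(d+1)}N^{d/(d+1)}$ gives the theorem; this shortcut also makes manifest that the constant depends only on $d$ and $|\O|$ through elementary quantities. A very minor remark: the caveat ``assume the trace of $u$ lies in $L^2(\partial\O)$'' is vacuous here, since for a bounded Lipschitz $\O$ the trace of an $H^1$ function automatically lies in $L^2(\partial\O)$.
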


\begin{proof} See Proposition 4.2.9 of \cite{W02}.
\end{proof}

\section{The case of Dirichlet boundary conditions}\label{sdiric}

The optimization problems for the shape functionals $F_q$ in \eqref{fbq} have been extensively studied in the Dirichlet case $\beta=+\infty$. We recall in the present section the most important results and we refer to \cite{BDM93} for a general result on shape optimization problems with Dirichlet boundary conditions. We start by the minimization problem in \eqref{mMbq}.

\begin{theo}\label{mqinfty}
In the case $\beta=+\infty$ the following facts hold for the quantity $m_q$ in \eqref{mMbq}:
\begin{itemize}
\item[-]for every $q>2/(d+2)$ we have $m_q=0$;
\item[-]for every $q\le2/(d+2)$ we have $m_q>0$; in addition, the value $m_q$ is reached by the ball $B$.
\end{itemize}
\end{theo}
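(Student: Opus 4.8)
The plan is to read off the critical exponent from the dilation behaviour of $F_q$ and then argue separately above it and at or below it. From $\lambda_\infty(t\O)=t^{-2}\lambda_\infty(\O)$ and $T_\infty(t\O)=t^{d+2}T_\infty(\O)$ one gets $F_q(t\O)=t^{q(d+2)-2}F_q(\O)$, so $q=2/(d+2)$ is exactly the exponent at which $F_q$ is invariant under dilations. When $q>2/(d+2)$ shrinking a domain lowers $F_q$ and the only obstruction to driving it to $0$ is the volume normalisation; when $q\le2/(d+2)$ the functional $F_q$ should instead be bounded below by combining the Faber--Krahn inequality with a sharp torsion--eigenvalue inequality.

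To prove $m_q=0$ for $q>2/(d+2)$ I would use test domains $\O_n=B_{\rho_n}\sqcup S_n$, the disjoint union of a small ball $B_{\rho_n}$ with $\rho_n\to0$ and a thin ``filler'' $S_n$ of Lebesgue measure $1-\omega_d\rho_n^d$ contained in a slab $\{0<x_d<\delta_n\}$, the width $\delta_n$ being chosen so that $\delta_n/\rho_n^{(d+2)/2}\to0$ (so in particular $\delta_n\ll\rho_n$). On a disjoint union $\lambda_\infty$ is the minimum and $T_\infty$ the sum of the values on the two pieces. Since $S_n$ lies in a slab of width $\delta_n$, the one--dimensional Friedrichs inequality yields $\lambda_\infty(S_n)\ge\pi^2\delta_n^{-2}$ and $T_\infty(S_n)\le\pi^{-2}\delta_n^2$; as $\delta_n\ll\rho_n^{(d+2)/2}\ll\rho_n$, the first estimate forces $\lambda_\infty(\O_n)=\lambda_\infty(B_{\rho_n})$, while the second makes $T_\infty(S_n)$ negligible compared with $T_\infty(B_{\rho_n})$. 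Using that $\lambda_\infty(B_{\rho_n})=c_d\,\rho_n^{-2}$ and $T_\infty(B_{\rho_n})=\omega_d\rho_n^{d+2}/(d(d+2))$, one concludes $F_q(\O_n)\le C\,\rho_n^{q(d+2)-2}\to0$, hence $m_q=0$.

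For $q\le2/(d+2)$ I would invoke the Kohler--Jobin inequality $\lambda_\infty(\O)^{(d+2)/2}T_\infty(\O)\ge\lambda_\infty(B)^{(d+2)/2}T_\infty(B)=:k_d$, which holds for every admissible $\O$ with equality only for balls. It gives $T_\infty(\O)\ge k_d\,\lambda_\infty(\O)^{-(d+2)/2}$ and therefore $F_q(\O)\ge k_d^{\,q}\,\lambda_\infty(\O)^{\,1-q(d+2)/2}$. Since $1-q(d+2)/2\ge0$, the Faber--Krahn inequality $\lambda_\infty(\O)\ge\lambda_\infty(B)$ gives $F_q(\O)\ge k_d^{\,q}\,\lambda_\infty(B)^{\,1-q(d+2)/2}=\lambda_\infty(B)T_\infty(B)^q=F_q(B)$; as $B$ is admissible this yields $m_q=F_q(B)>0$, and the two equality cases (each forcing $\O$ to be a ball, hence the unit--measure ball $B$) show that the infimum is attained by $B$.

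The disjoint--union identities for $\lambda_\infty$ and $T_\infty$ and the slab estimates are routine; the delicate point in the first case is the calibration of $\delta_n$, which must be small enough (relative to $\rho_n^{(d+2)/2}$) that the filler contributes negligibly to the torsion, yet --- being thinner than $B_{\rho_n}$ --- does not pull down the first eigenvalue. In the second case the essential ingredient is the \emph{sharp} torsion--eigenvalue inequality of Kohler--Jobin: the elementary tools available here (Faber--Krahn, Saint-Venant, or the Gagliardo--Nirenberg inequality of Theorem \ref{warma}) only give $T_\infty(\O)\gtrsim\lambda_\infty(\O)^{-(d+1)}$, which would produce the weaker threshold $1/(d+1)$, so the exponent $2/(d+2)$ genuinely relies on Kohler--Jobin.
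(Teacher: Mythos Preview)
Your argument is correct; both parts differ in execution from the paper but follow the same overall strategy.

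For the first assertion the paper uses a disjoint union of one ball $B_\delta$ and $N$ balls $B_\eps$ with $\omega_d(\delta^d+N\eps^d)=1$ and $\eps^2\ll\delta^{d+2}$, which plays exactly the same role as your slab filler: the small pieces absorb the volume constraint while contributing negligibly to $T_\infty$ and not affecting $\lambda_\infty$. Your slab construction is equally valid, and the calibration $\delta_n\ll\rho_n^{(d+2)/2}$ is the correct analogue of $\eps^2\ll\delta^{d+2}$.

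For the second assertion the paper, like you, cites Kohler--Jobin for the sharp statement $m_q=F_q(B)$; your reduction (Kohler--Jobin for $q=2/(d+2)$ combined with Faber--Krahn to cover all $q\le2/(d+2)$) is a clean way of packaging that citation. The paper, however, adds a self-contained proof of the weaker statement $m_q>0$ that does \emph{not} use Kohler--Jobin: it uses the $H^1_0$ Gagliardo--Nirenberg inequality
\[
\|u\|_{L^2(\O)}\le C\,\|\nabla u\|_{L^2(\O)}^{d/(d+2)}\|u\|_{L^1(\O)}^{2/(d+2)},
\]
applied to the first eigenfunction, which after a short computation yields $F_q(\O)\ge C^{-2}\lambda_\infty(B)^{(1-q-d/(d+2))/2}$. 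This corrects your closing remark: the exponent $d/(d+2)$ here (available because $u\in H^1_0$) already gives the threshold $2/(d+2)$ for positivity; it is only the Robin version of Theorem~\ref{warma}, with exponent $d/(d+1)$, that produces $1/(d+1)$. Kohler--Jobin is genuinely needed only to identify the ball as the minimiser, not for $m_q>0$.
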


\begin{proof}
The first assertion can be proved by taking as $\O$ the disjoint union of a ball $B_\delta$ and of $N$ balls $B_\eps$, with $\delta>\eps$ and $\omega_d(\delta^d+N\eps^d)=1$. We have
\[\begin{split}
F_q(\O)&=\delta^{-2}\lambda_\infty(B_1)\Big(\big(\delta^{d+2}+N\eps^{d+2}\big)T_\infty(B_1)\Big)^q\\
&=F_q(B_1)\delta^{-2}\big(\delta^{d+2}+\eps^2(1/\omega_d-\delta^d)\big)^q.
\end{split}\]
By taking $\eps^2\ll\delta^{d+2}$ we obtain
$$F_q(\O)\approx F_q(B_1)\delta^{-2+q(d+2)},$$
and the first assertion is proved by passing to the limit as $\delta\to0$.

For the second assertion, the equality $m_q=F_q(B)$ when $q\le2/(d+2)$ was proved in \cite{KJ78a} and \cite{KJ78b}; for an extension to some nonlinear cases we refer to \cite{Br14}. We give here a simple proof of the inequality $m_q>0$, that can be obtained by the Gagliardo-Nirenberg inequality
\be\label{gn}
\|u\|_{L^2(\O)}\le C\|\nabla u\|^\theta_{L^2(\O)}\|u\|^{1-\theta}_{L^1(\O)}\qquad\text{for every }u\in H^1_0(\O),
\ee
where $\theta=d/(d+2)$ and $C$ is a constant that depends only on the dimension $d$.

In fact, if $u$ is a first eigenfunction of $-\Delta$ with Dirichlet boundary conditions, by the definition \eqref{rtorsi} of $T_\infty$ we obtain
\[\begin{split}
F_q(\O)&\ge\frac{\int_\O|\nabla u|^2\,dx}{\int_\O|u|^2\,dx}\bigg(\frac{\big(\int_\O u\,dx\big)^2}{\int_\O|\nabla u|^2\,dx}\bigg)^q\\
&=\bigg(\frac{\|\nabla u\|^{1-q}_{L^2(\O)}\|u\|^q_{L^1(\O)}}{\|u\|_{L^2(\O)}}\bigg)^2.
\end{split}\]
By the Gagliardo-Nirenberg inequality \eqref{gn} we have
\[\begin{split}
F_q(\O)&\ge\frac{1}{C^2}\bigg(\frac{\|\nabla u\|_{L^2(\O)}}{\|u\|_{L^1(\O)}}\bigg)^{1-q-\theta}\\
&=\frac{1}{C^2}\big(\lambda_\infty(\O)\big)^{(1-q-\theta)/2}\bigg(\frac{\|u\|_{L^2(\O)}}{\|u\|_{L^1(\O)}}\bigg)^{1-q-\theta}.
\end{split}\]
Now, the Faber-Krahn inequality, the fact that $1-q-\theta\ge0$, and the H\"older inequality, give
$$F_q(\O)\ge\frac{1}{C^2}\big(\lambda_\infty(B)\big)^{(1-q-\theta)/2}.$$
Hence, taking the infimum over all $\O$ with $|\O|=1$, gives
$$m_q\ge\frac{1}{C^2}\big(\lambda_\infty(B)\big)^{(1-q-\theta)/2},$$
as required.
\end{proof}

We now pass to the quantity $M_q$ in \eqref{mMbq}.

\begin{theo}\label{Mqinfty}
In the case $\beta=+\infty$ the following facts hold for the quantity $M_q$ in \eqref{mMbq}:
\begin{itemize}
\item[-]for every $q<1$ we have $M_q=+\infty$;
\item[-]for $q=1$ we have $M_q=1$;
\item[-]for every $q>1$ we have $M_q<+\infty$;
\item[-]there exists $\bar q>1$ such that for every $q\ge\bar q$ the value $M_q$ is reached by the ball $B$.
\end{itemize}
\end{theo}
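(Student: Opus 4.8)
The plan is to treat the four assertions in turn, all resting on the elementary bound
\be\label{lambdaTone}
\lambda_\infty(\O)\,T_\infty(\O)\le1\qquad\text{whenever }|\O|=1,
\ee
which I would establish first. If $w\in H^1_0(\O)$ is the Dirichlet torsion function, then $\int_\O|\nabla w|^2\dx=\int_\O w(-\Delta w)\dx=\int_\O w\dx=T_\infty(\O)$, so using $w$ as a competitor in the Rayleigh quotient \eqref{reigen} (restricted to $H^1_0$) gives $\lambda_\infty(\O)\le T_\infty(\O)/\int_\O w^2\dx$; by Cauchy--Schwarz, $\int_\O w^2\dx\ge(\int_\O w\dx)^2/|\O|=T_\infty(\O)^2$, and \eqref{lambdaTone} follows. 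The same computation shows that $\kappa:=\lambda_\infty(B)T_\infty(B)$ is strictly less than $1$, since equality in \eqref{lambdaTone} would force $w$ to be a nonzero constant, which is impossible in $H^1_0$.

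Granting \eqref{lambdaTone}, the bounds for $q\ge1$ are immediate from the factorization $F_q(\O)=\big(\lambda_\infty(\O)T_\infty(\O)\big)T_\infty(\O)^{q-1}\le T_\infty(\O)^{q-1}$. For $q>1$ the Saint--Venant inequality gives $F_q(\O)\le T_\infty(B)^{q-1}$, hence $M_q\le T_\infty(B)^{q-1}<+\infty$. For $q=1$ the factorization reads $F_1(\O)\le1$, so $M_1\le1$; for the reverse inequality $M_1\ge1$ I would recall the homogenization construction of \cite{BBP22}, \cite{BBG24}: perforating a fixed ball by a periodic array of holes of the critical Cioranescu--Murat size produces in the limit the ``strange term'' equations $-\Delta u+\mu u=1$ and $-\Delta u+\mu u=\lambda u$ on $B$, and letting the density of holes grow one has $\mu\to+\infty$, $T_\infty\to|B|/\mu$, $\lambda_\infty\to\mu$, so that along this sequence $\lambda_\infty T_\infty\to1$.

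For $q<1$ I would prove $M_q=+\infty$ by testing on thin slabs $\O_\eps=(0,a_\eps)^{d-1}\times(0,\eps)$ with $a_\eps^{d-1}\eps=1$. The one-dimensional Poincar\'e inequality in the thin direction gives $\lambda_\infty(\O_\eps)\ge\pi^2\eps^{-2}$, while plugging a product of the profile $x_d(\eps-x_d)$ with a trapezoidal cut-off in the remaining variables into \eqref{rtorsi} gives $T_\infty(\O_\eps)\ge(1-o(1))\eps^2/12$ as $\eps\to0$, the cut-off contributing only a lower-order error since $a_\eps\to+\infty$. Therefore $F_q(\O_\eps)\ge(1-o(1))\pi^2 12^{-q}\eps^{2q-2}\to+\infty$, because $2q-2<0$. (A ball cut into many thin slices by vanishing slits works just as well.)

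The genuinely delicate statement is the last one, that the ball maximizes $F_q$ for all $q\ge\bar q$; here I would argue by a dichotomy. Fix a threshold $c\in(0,1)$: if $T_\infty(\O)\le c\,T_\infty(B)$, then the factorization above gives $F_q(\O)\le c^{q-1}T_\infty(B)^{q-1}<\kappa\,T_\infty(B)^{q-1}=F_q(B)$ as soon as $q>1+\log(1/\kappa)/\log(1/c)$, so ``far from the ball'' the inequality $F_q(\O)\le F_q(B)$ is automatic. In the complementary range $T_\infty(\O)>c\,T_\infty(B)$, the quantitative Saint--Venant inequality forces $\O$ to lie in a small neighbourhood of a ball, and one must compare $F_q$ with $F_q(B)$ through a second-order expansion at $B$, exploiting that $B$ minimizes $\lambda_\infty$ and maximizes $T_\infty$ under the volume constraint, so that both second variations have a sign, together with the fact that the $\lambda_\infty$-excess is controlled by the $T_\infty$-deficit. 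This perturbative analysis, and the compactness argument needed to glue it to the intermediate regime, is the main obstacle; it is carried out in \cite{BBG24} and \cite{BBP22}, which I would cite for the details, noting that the threshold $\bar q$ obtained in this way need not be optimal.
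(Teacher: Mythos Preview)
Your argument is correct and essentially matches the paper's: the bound $\lambda_\infty T_\infty\le 1$ via the torsion function and Cauchy--Schwarz, the factorization $F_q=F_1\,T_\infty^{q-1}$ combined with Saint--Venant for $q>1$, and the homogenization route to $M_1\ge1$ are exactly what the paper does (the paper cites \cite{VDBFNT16} and \cite{BBP22} for the latter). For $q<1$ you use thin slabs, whereas the paper takes a disjoint union of $N$ equal balls of radius $\eps$ with $\omega_dN\eps^d=1$, so that scaling gives $F_q(\O)=F_q(B_1)\eps^{2q-2}$ in one line; both constructions are valid and equally elementary. One correction on the last item: the paper stresses that the reverse Kohler--Jobin inequality was only \emph{conjectured} in \cite{BBG24} and was \emph{proved} in \cite{BLNP23}, so your pointer to \cite{BBP22} for the perturbative analysis near the ball should be replaced by \cite{BLNP23}.
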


\begin{proof}
The first assertion follows by taking as $\O$ the disjoint union on $N$ balls $B_\eps$, with $\omega_d N\eps^d=1$. This gives
$$F_q(\O)=\eps^{-2}\lambda_\infty(B_1)\big(N\eps^{d+2}T_\infty(B_1)\big)^q=F_q(B_1)\eps^{-2+2q},$$
so that, passing to the limit as $\eps\to0$, we have $M_q=+\infty$ when $q<1$.

In the second assertion, to prove that $M_1\le1$ it is enough to consider the solution $w$ of the PDE
$$\begin{cases}
-\Delta u=1&\text{in }\O\\
u=0&\text{on }\partial\O,
\end{cases}$$
and take it as a test function for $\lambda_\infty$. This gives
$$F_1(\O)\le\frac{\int_\O|\nabla w|^2\,dx}{\int_\O w^2\,dx}\frac{\big(\int_\O w\,dx\big)^2}{\int_\O|\nabla w|^2\,dx}=\frac{\big(\int_\O w\,dx\big)^2}{\int_\O|w|^2\,dx}$$
and, by H\"older inequality, we conclude that $M_1\le1$. The proof that $M_1=1$ is more delicate and we refer to \cite{VDBFNT16}; a simpler proof, that uses capacitary measures (for which we refer to \cite{BB05}), can be found in \cite{BBP22}.

To prove the third assertion it is enough to write
$$F_q(\O)=F_1(\O)\big(T_\infty(\O)\big)^{q-1}.$$
By the inequality $F_1(\O)\le1$ and the Saint-Venant inequality $T_\infty(\O)\le T_\infty(B)$, we obtain
$$F_q(\O)\le\big(T_\infty(B)\big)^{q-1},$$
so that $M_q\le\big(T_\infty(B)\big)^{q-1}$.

For the last assertion we refer to \cite{BBG24}, where this property was conjectured under the name of {\it reverse Kohler-Jobin inequality}, and to \cite{BLNP23}, where it was proved.
\end{proof}

\section{The case of Robin boundary conditions}\label{srobin}

In this section we consider the Robin case $\beta<+\infty$. We start by studying the case of $m_q$.

\begin{theo}\label{mqb}
In the case $\beta<+\infty$ the following facts hold for the quantity $m_q$ in \eqref{mMbq}:
\begin{itemize}
\item[-]for every $q>1/(d+1)$ we have $m_q=0$;
\item[-]for every $q\le1/(d+1)$ we have $m_q>0$.
\end{itemize}
\end{theo}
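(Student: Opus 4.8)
### Proof Proposal

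The plan is to mimic, as far as possible, the structure of the Dirichlet proof in Theorem \ref{mqinfty}, but paying careful attention to the different scaling \eqref{eq: scaling} and to the precise ball estimates in Proposition \ref{estim}. For the first assertion ($q>1/(d+1)\Rightarrow m_q=0$), I would take $\O$ to be the disjoint union of one ``large'' ball $B_\delta$ and $N$ ``small'' balls $B_\eps$, with $\omega_d(\delta^d+N\eps^d)=1$ and $\delta\gg\eps$. For a disjoint union, $\lambda_\beta(\O)=\min\{\lambda_\beta(B_\delta),\lambda_\beta(B_\eps)\}=\lambda_\beta(B_\delta)$ (the larger ball gives the smaller eigenvalue), while torsional rigidity is additive: $T_\beta(\O)=T_\beta(B_\delta)+NT_\beta(B_\eps)$. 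Using \eqref{lbr} and \eqref{tbr}, and choosing $\eps$ and $\delta$ in a regime where the Robin term dominates (i.e. $\beta r\ll1$, which holds automatically as $r\to0$ for fixed $\beta$), we get $\lambda_\beta(B_\delta)\asymp\beta/\delta$ and $T_\beta(B_\delta)\asymp\delta^{d+1}/\beta$, $T_\beta(B_\eps)\asymp\eps^{d+1}/\beta$. Hence
\[
F_q(\O)\asymp\frac{\beta}{\delta}\Big(\frac{\delta^{d+1}}{\beta}+\frac{N\eps^{d+1}}{\beta}\Big)^q
=\beta^{1-q}\,\delta^{-1}\big(\delta^{d+1}+N\eps^{d+1}\big)^q .
\]
Now impose the volume constraint $N\eps^d=\omega_d^{-1}-\delta^d$, so $N\eps^{d+1}=\eps(\omega_d^{-1}-\delta^d)$; choosing $\eps\to0$ faster than any needed rate makes $N\eps^{d+1}$ negligible against $\delta^{d+1}$, giving $F_q(\O)\asymp\beta^{1-q}\delta^{-1+q(d+1)}$, which tends to $0$ as $\delta\to0$ precisely when $q>1/(d+1)$. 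One subtlety to check carefully: as $\delta\to0$ we need $N\eps^d$ to stay bounded, so $N\to\infty$; this is fine, and $N\eps^{d+1}=\eps\cdot O(1)\to0$ as long as $\eps\to0$, which we are free to arrange. A second subtlety: the estimates in \eqref{lbr} have dimensional constants on the upper side; since we only need $F_q(\O)\to0$, the upper bound $\lambda_\beta(B_\delta)\le C_d\beta/(\delta(1+\beta\delta))\le C_d\beta/\delta$ combined with the additivity of $T_\beta$ (an exact identity) suffices, so constants do not obstruct the argument.

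For the second assertion ($q\le1/(d+1)\Rightarrow m_q>0$), I would adapt the Gagliardo–Nirenberg argument but use the \emph{Robin} version stated in Theorem \ref{warma}. Let $u$ be a first Robin eigenfunction for $\O$, so $\int_\O|\nabla u|^2+\beta\int_{\partial\O}u^2=\lambda_\beta(\O)\int_\O u^2$. Denote $Q(u):=\int_\O|\nabla u|^2\,dx+\beta\int_{\partial\O}u^2\,d\HH^{d-1}$. Using $u$ as a test function in \eqref{rtorsi},
\[
F_q(\O)\ge\frac{Q(u)}{\int_\O u^2\,dx}\,\Big(\frac{(\int_\O u\,dx)^2}{Q(u)}\Big)^q
=\frac{Q(u)^{1-q}\,\|u\|_{L^1}^{2q}}{\|u\|_{L^2}^2}.
\]
Theorem \ref{warma} gives $\|u\|_{L^2}^2\le C\,[Q'(u)]^{d/(d+1)}\|u\|_{L^1}^{2/(d+1)}$ where $Q'(u)=\int_\O|\nabla u|^2+\int_{\partial\O}u^2$; since $\beta$ is fixed, $Q'(u)\le\max\{1,1/\beta\}\,Q(u)=:c_\beta Q(u)$, so $\|u\|_{L^2}^2\le C c_\beta^{d/(d+1)}\,Q(u)^{d/(d+1)}\|u\|_{L^1}^{2/(d+1)}$. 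Substituting,
\[
F_q(\O)\ge\frac{1}{C c_\beta^{d/(d+1)}}\;Q(u)^{1-q-d/(d+1)}\;\|u\|_{L^1}^{2q-2/(d+1)}.
\]
The exponent $1-q-d/(d+1)=1/(d+1)-q\ge0$ by hypothesis, and $2q-2/(d+1)=2(q-1/(d+1))\le0$. So I need a \emph{lower} bound on $Q(u)$ and an \emph{upper} bound on $\|u\|_{L^1}$ (both uniform over $|\O|=1$) that remain harmless. For $Q(u)$: by the eigenvalue characterization and the Robin Faber–Krahn inequality recalled in Section \ref{spreli}, $Q(u)=\lambda_\beta(\O)\|u\|_{L^2}^2\ge\lambda_\beta(B)\|u\|_{L^2}^2$; but this still carries $\|u\|_{L^2}^2$. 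Normalizing $\|u\|_{L^2}=1$, we get $Q(u)=\lambda_\beta(\O)\ge\lambda_\beta(B)>0$ and, by Hölder (since $|\O|=1$), $\|u\|_{L^1}\le\|u\|_{L^2}=1$. Plugging in, with $q\le1/(d+1)$ making the $\|u\|_{L^1}$-exponent $\le0$ and $\|u\|_{L^1}\le1$,
\[
F_q(\O)\ge\frac{1}{Cc_\beta^{d/(d+1)}}\,\lambda_\beta(\O)^{1/(d+1)-q}\cdot\|u\|_{L^1}^{2(q-1/(d+1))}\ge\frac{1}{Cc_\beta^{d/(d+1)}}\,\lambda_\beta(B)^{1/(d+1)-q},
\]
using $\|u\|_{L^1}^{2(q-1/(d+1))}\ge 1$ since the base is $\le1$ and the exponent is $\le0$. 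Taking the infimum over $|\O|=1$ gives $m_q\ge C^{-1}c_\beta^{-d/(d+1)}\lambda_\beta(B)^{1/(d+1)-q}>0$.

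I expect the main obstacle to be bookkeeping rather than conceptual: in the construction for the first assertion one must verify that the small balls genuinely contribute a negligible amount to $T_\beta(\O)$ after imposing the volume constraint — this is where the \emph{Robin} scaling $T_\beta(B_\eps)\asymp\eps^{d+1}/\beta$ (exponent $d+1$, not $d+2$) is essential and is exactly what shifts the threshold from $2/(d+2)$ to $1/(d+1)$. For the second assertion, the one genuinely careful point is that Theorem \ref{warma}'s constant $C$ depends on $|\O|$, but since we fix $|\O|=1$ this is harmless; and that one must use the Robin Faber–Krahn inequality (available from Section \ref{spreli}) rather than the Dirichlet one. If one wants a $\beta$-independent statement, note that $c_\beta=\max\{1,1/\beta\}$ and $\lambda_\beta(B)$ both depend on $\beta$, so the bound on $m_q$ depends on $\beta$ — consistent with the theorem, which only claims $m_q>0$ for each fixed $\beta$.
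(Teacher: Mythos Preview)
Your proof is correct and follows essentially the same approach as the paper: the same two-scale ball construction for the first assertion, and the same combination of the first Robin eigenfunction with Theorem~\ref{warma} for the second. The only difference is how you close the case $q<1/(d+1)$: the paper factors $F_{\beta,q}(\O)=F_{\beta,1/(d+1)}(\O)\,[T_\beta(\O)]^{q-1/(d+1)}$ and invokes the Robin Saint-Venant inequality $T_\beta(\O)\le T_\beta(B)$, whereas you normalize $\|u\|_{L^2}=1$ and use the Robin Faber--Krahn inequality $\lambda_\beta(\O)\ge\lambda_\beta(B)$ together with H\"older to control the residual exponents---both routes are valid and rely only on facts already recalled in Section~\ref{spreli}.
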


\begin{proof}
The first assertion can be proved by taking as $\O$ the disjoint union of a ball $B_\delta$ and of $N$ balls $B_\eps$, with $\delta>\eps$ and $\omega_d(\delta^d+N\eps^d)=1$. We have by \eqref{lbr} and \eqref{tbr}
\[\begin{split}
F_q(\O)&\approx\delta^{-1}\Big(\big(\delta^{d+1}+N\eps^{d+1}\big)\Big)^q\\
&=\delta^{-1}\big(\delta^{d+1}+\eps(1/\omega_d-\delta^d)\big)^q.
\end{split}\]
By taking $\eps\ll\delta^{d+1}$ we obtain
$$F_q(\O)\approx\delta^{-1+q(d+1)},$$
and the first assertion is proved by passing to the limit as $\delta\to0$.

For the second assertion, by the definition of the torsional rigidity $T_\beta$, denoting by $u$ the first eigenfunction of $-\Delta$ with Robin boundary condition on $\partial\O$, we have
\[\begin{split}
F_{\beta,q}(\O)&\ge\frac{\int_\O|\nabla u|^2\,dx+\beta\int_{\partial\O}u^2\,d\HH^{d-1}}{\int_\O u^2\,dx}\bigg[\frac{\big(\int_\O u\,dx\big)^2}{\int_\O|\nabla u|^2\,dx+\beta\int_{\partial\O}u^2\,d\HH^{d-1}}\bigg]^q\\
&=\frac{\big[\int_\O|\nabla u|^2\,dx+\beta\int_{\partial\O}u^2\,d\HH^{d-1}\big]^{1-q}\big[\int_\O u\,dx\big]^{2q}}{\int_\O u^2\,dx}.
\end{split}\]
If $q=1/(d+1)$ the assertion is proved as soon as
$$\int_\O u^2\,dx\le C\bigg[\int_\O|\nabla u|^2\,dx+\int_{\partial\O}u^2\,d\HH^{d-1}\bigg]^{d/(d+1)}\bigg[\int_\O u\,dx\bigg]^{2/(d+1)}$$
with a constant $C>0$ which does not depend on $\O$ when $|\O|=1$, which follows from Theorem \ref{warma}.

If $q<1/(d+1)$ we have, by the optimality of $B$ for $T_\beta$,
$$F_{\beta,q}(\O)=F_{\beta,1/(d+1)}(\O)\big[T_\beta(\O)\big]^{q-1/(d+1)}\ge F_{\beta,1/(d+1)}(\O)\big[T_\beta(B)\big]^{q-1/(d+1)},$$
and the assertion now follows from the case $q=1/(d+1)$.
\end{proof}

\begin{rema}
Notice the difference between the threshold $2/(d+2)$ of the case $\beta=+\infty$ and $1/(d+1)$ of the case $\beta<+\infty$. Then, for $1/(d+1)<q\le2/(d+2)$ we have $m_{\beta,q}=0$ for every $\beta<+\infty$, while $m_{\infty,q}>0$. This in spite of the fact that $\lambda_\beta(\O)$ and $T_\beta(\O)$ tend to $\lambda_\infty(\O)$ and $T_\infty(\O)$ respectively, as $\beta\to+\infty$, and so
$$\lim_{\beta\to+\infty}F_{\beta,q}(\O)=F_{\infty,q}(\O)\qquad\text{for every }\O.$$
\end{rema}

We now pass to consider the case of $M_q$. In the following theorem, we prove the finiteness of $M_q$ for $q>1$, and the equality $M_1=1$ that we obtain by means of homogenization. To do so, we construct a suitable sequence of smooth domains with a periodic lattice of smooth holes. We inform the reader that some parts of the following proof, namely the estimate on the eigenvalues  \eqref{eq:homoconvlambda}, could also be deduced within the abstract framework devised in \cite[Theorem 1]{K85} (which can be seen as the Robin counterpart of the classical results in \cite{ciomur}); nevertheless, we give an independent complete proof for the sake of self-containment.

\begin{theo}\label{Mqb}
In the case $\beta<+\infty$ the following facts hold for the quantity $M_q$ in \eqref{mMbq}:
\begin{itemize}
\item[-]for every $q<1$ we have $M_q=+\infty$;
\item[-]for $q=1$ we have $M_q=1$;
\item[-]for every $q>1$ we have $M_q<+\infty$;
\end{itemize}
\end{theo}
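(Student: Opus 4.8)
The three assertions have very different natures, so I would treat them separately. For $q<1$ the plan is to reuse the perforation construction from the proof of Theorem~\ref{mqb}: take $\O$ to be the disjoint union of $N$ balls $B_\eps$ with $\omega_d N\eps^d=1$. By the scaling relations \eqref{eq: scaling}, $\lambda_\beta(\O)=\lambda_\beta(B_\eps)=\eps^{-2}\lambda_{\eps\beta}(B_1)$ and $T_\beta(\O)=N T_\beta(B_\eps)=N\eps^{d+2}T_{\eps\beta}(B_1)$. Using the sharp estimates \eqref{lbr} and \eqref{tbr}, as $\eps\to0$ one has $\lambda_\beta(B_\eps)\sim\beta d\,\eps^{-1}$ (up to a dimensional constant it behaves like $\beta/\eps$) and $T_\beta(\O)=N\,\frac{\omega_d}{d}(\eps^{d+1}/\beta+\eps^{d+2}/(d+2))\sim \frac{1}{d\beta}\eps$; hence $F_q(\O)\sim c\,\eps^{-1}\eps^{q}=c\,\eps^{q-1}\to+\infty$ since $q<1$. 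This gives $M_q=+\infty$ immediately.

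For $q>1$ the plan is the same factorization trick used in the Dirichlet case: write $F_q(\O)=F_1(\O)\,(T_\beta(\O))^{q-1}$. Granting $M_1=1$ (so $F_1(\O)\le1$ for all admissible $\O$) and the Robin Saint-Venant inequality $T_\beta(\O)\le T_\beta(B)$ recalled in Section~\ref{spreli}, we get $F_q(\O)\le(T_\beta(B))^{q-1}$, hence $M_q\le(T_\beta(B))^{q-1}<+\infty$. Note the upper bound $M_1\le 1$ itself is cheap: take the Robin torsion function $w_\beta$ as a test function in the Rayleigh quotient \eqref{reigen}; since $-\Delta w_\beta=1$ with the Robin condition, testing the weak formulation against $w_\beta$ gives $\int_\O|\nabla w_\beta|^2+\beta\int_{\partial\O}w_\beta^2=\int_\O w_\beta$, so
\[
F_1(\O)\le\frac{\int_\O|\nabla w_\beta|^2+\beta\int_{\partial\O}w_\beta^2}{\int_\O w_\beta^2}\cdot\frac{(\int_\O w_\beta)^2}{\int_\O|\nabla w_\beta|^2+\beta\int_{\partial\O}w_\beta^2}=\frac{(\int_\O w_\beta)^2}{\int_\O w_\beta^2}\le|\O|=1
\]
by Cauchy--Schwarz. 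So the entire weight of the theorem is on proving the reverse inequality $M_1\ge1$ for the Robin case.

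For $M_1\ge1$ the plan is a homogenization argument: construct a sequence of smooth domains $\O_n$ obtained from a fixed reference domain (say the unit ball, or a cube of volume $1$) by removing a periodic array of small smooth holes, whose period is of order $\varepsilon_n$ and whose radius $r_n$ is chosen at the critical scale dictated by the Robin condition — this is where the Robin case genuinely differs from the Dirichlet ``Cioranescu--Murat'' strange-term regime. In the Dirichlet case the critical hole radius in dimension $d\ge3$ is $r_n\sim\varepsilon_n^{d/(d-2)}$, the capacity of a hole being the relevant quantity; in the Robin case, since the energy contribution of a hole of radius $r$ is governed by the surface term $\beta\int_{\partial B_r}u^2\sim\beta r^{d-1}$ rather than by capacity, the balance $N_n\cdot\beta r_n^{d-1}=O(1)$ with $N_n\sim\varepsilon_n^{-d}$ forces $r_n^{d-1}\sim\varepsilon_n^{d}$, i.e. $r_n\sim\varepsilon_n^{d/(d-1)}$ — a different, larger scaling. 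One then shows that along this sequence the effective (homogenized) problem is $-\Delta u+\mu u=\lambda u$ for a suitable constant $\mu=\mu(\beta)>0$ coming from the holes, both for the eigenvalue problem and for the torsion problem; the point is that \emph{the same} constant $\mu$ appears in both limits, so $\lambda_\beta(\O_n)\to\lambda_\infty^{(\mu)}$ and $T_\beta(\O_n)\to T_\infty^{(\mu)}$ where these are the first eigenvalue and torsional rigidity of the \emph{same} operator $-\Delta+\mu$ on the reference domain (with Dirichlet data on the outer boundary, as the holes disappear and the Robin boundary contribution on the fixed outer boundary becomes negligible after rescaling, or one works on the torus to avoid outer-boundary issues). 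Since the relaxed operator $-\Delta+\mu$ is exactly of the type for which $F_1=1$ is attained in the limit (the torsion function of $-\Delta+\mu$ is an admissible test function showing $\lambda\,T=1$ in the homogenized limit, as in \cite{BBP22}), one sends $n\to\infty$ and then lets $\mu$ range appropriately to conclude $\sup_\O F_1(\O)\ge1$, hence $M_1=1$.

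The main obstacle is the homogenization step, specifically establishing the two convergences $\lambda_\beta(\O_n)\to\lambda_\infty^{(\mu)}$ (this is \eqref{eq:homoconvlambda} in the paper) and $T_\beta(\O_n)\to T_\infty^{(\mu)}$ with a common strange term $\mu$. Concretely this requires: (i) constructing suitable oscillating test functions (correctors) adapted to the Robin holes at scale $r_n\sim\varepsilon_n^{d/(d-1)}$ and computing that $\beta\int_{\cup\partial(\text{holes})}|\text{corrector}|^2\,d\HH^{d-1}\to\mu\int u^2$; (ii) proving compactness of the (suitably extended) eigenfunctions and torsion functions in $L^2$, which is delicate because of the perforations and the Robin term on the holes; (iii) a $\Gamma$-convergence or direct lower-semicontinuity argument identifying the limit energy functional as $\int|\nabla u|^2+\mu\int u^2$. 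As the paper itself notes, part (i)--(iii) for the eigenvalue could be routed through the abstract result of \cite{K85}, the Robin analogue of \cite{ciomur}, but one still has to verify the hypotheses for this particular geometry and, crucially, run the parallel argument for the torsion function so that the constant $\mu$ matches — the matching of the strange terms in the two problems is what makes $F_1\to1$ work and is the heart of the proof.
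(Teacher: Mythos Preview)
Your treatment of the cases $q<1$, $M_1\le 1$, and $q>1$ is essentially identical to the paper's (your asymptotic $F_q(\O)\sim c\,\eps^{q-1}$ is in fact the correct exponent; the paper's displayed $\eps^{-2+2q}$ appears to be a slip, but both tend to $+\infty$ for $q<1$).

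For the hard direction $M_1\ge 1$ your strategy is sound but over-engineered compared to the paper, and this matters because the step you yourself flag as the ``main obstacle'' --- proving a \emph{matching} homogenization limit for the torsion $T_\beta(\O_n)\to T^{(\mu)}$ with the same strange term $\mu$ --- is precisely what the paper avoids. The paper perforates the unit cube at the scale you correctly identify, $r_N=k/N^{d/(d-1)}$, but then handles the two quantities asymmetrically: it proves only a \emph{liminf} estimate for the eigenvalue, $\liminf_N\lambda_\beta(\Omega_N^k)\ge \beta k^{d-1}\omega_{d-1}$ (this is \eqref{eq:homoconvlambda}, obtained via a Rellich--Pohozaev identity to control the tangential gradient on the holes, harmonic extension inside the holes, and $H^{-1}$-convergence of the surface measures), while for the torsion it uses nothing more than the trivial bound obtained by testing with the constant $w\equiv 1$,
\[
T_\beta(\Omega_N^k)\ge \frac{|\Omega_N^k|^2}{\beta\,\mathcal{H}^{d-1}(\partial\Omega_N^k)}=\frac{|\Omega_N^k|^2}{\beta\big(\mathcal{H}^{d-1}(\partial Q_1)+k^{d-1}\omega_{d-1}\big)}\,.
\]
Combining, one gets $M_1\ge \dfrac{k^{d-1}\omega_{d-1}}{\mathcal{H}^{d-1}(\partial Q_1)+k^{d-1}\omega_{d-1}}$ and lets $k\to+\infty$. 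So no homogenization of the torsion, no correctors, no ``matching of strange terms'' is required; the extra free parameter $k$ does the work that in your plan is done by sending $\mu\to+\infty$ in the limit problem. Your route would succeed, but it front-loads exactly the technical burden (compactness of torsion functions across the perforations, identification of the limit torsion) that the paper's trick sidesteps. Incidentally, the paper also sketches in Remark~\ref{gamma} a $\gamma$-closure argument that is closer in spirit to your plan: one shows that the $\gamma$-closure of Robin problems contains the capacitary measures $c\,dx$, giving $M_1\ge(\lambda_\beta(B)+c)/(\beta\mathcal{H}^{d-1}(\partial B)+c)\to 1$ as $c\to+\infty$; this is essentially your ``let $\mu$ range'' step, but reached via abstract $\gamma$-convergence rather than explicit perforation analysis. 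A minor correction: the outer boundary retains its Robin condition throughout --- it is not replaced by a Dirichlet condition --- and its contribution is simply dominated by the hole term as $k\to+\infty$.
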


\begin{proof}
The first assertion follows by taking as $\O$ the disjoint union on $N$ balls $B_\eps$, with $\omega_d N\eps^d=1$. This gives by \eqref{lbr} and \eqref{tbr}
$$F_q(\O)\approx\eps^{-1}\big(N\eps^{d+1}\big)^q\approx\eps^{-2+2q},$$
so that, passing to the limit as $\eps\to0$, we have $M_q=+\infty$ when $q<1$.

In the second assertion, to prove that $M_1\le1$ it is enough to consider the solution $w$ of the PDE
$$\begin{cases}
-\Delta u=1&\text{in }\O\\
\partial_\nu u+\beta u=0&\text{on }\partial\O,
\end{cases}$$
and take it as a test function for $\lambda_\beta$. This gives
$$F_1(\O)\le\frac{\int_\O|\nabla w|^2\,dx+\beta\int_{\partial\O}w^2\,d\HH^{d-1}}{\int_\O w^2\,dx}\frac{\big(\int_\O w\,dx\big)^2}{\int_\O|\nabla w|^2\,dx+\beta\int_{\partial\O}w^2\,d\HH^{d-1}}=\frac{\big(\int_\O w\,dx\big)^2}{\int_\O|w|^2\,dx}$$
and, by H\"older inequality, we conclude that $M_1\le1$.

In order to prove the third assertion it is enough to write
$$F_q(\O)=F_1(\O)T^{q-1}_\beta(\O)\le M_1 T^{q-1}_\beta(\O)$$
and to use the inequality $M_1\le1$ and $T_\beta(\O)\le T_\beta(B)$ seen in Section \ref{spreli}.

It remains to prove that $M_1\ge1$, for which we have to construct a sequence $\O_n$ such that $\lim_n F_1(\O_n)\ge1$. 

We start considering a unit cube $Q_1=(0,1)^d$ perforated by small balls arranged on a periodic lattice.  
For $N \in \N$  and $\vec{\ell} \in \{1,...N\}^d$, let $x_{\vec{\ell}}$ denote the lattice points of spacing $1/N$, and remove balls of radius 
\[
r_N = \frac{k}{N^{d/(d-1)}}, \quad k>0.
\] 
Let $\Omega_N^k = Q_1 \setminus \bigcup_{\vec{\ell}} B(x_{\vec{\ell}}, r_N)$ and define the surface measures
\[
\mu_N = \sum_{\vec{\ell}}\mathcal{H}^{d-1}\mres_{\partial B(x_{\vec{\ell}}, r_N)}.
\]
Observe that 
\begin{equation}\label{eq: totalmass}
\mu_N(\R^d)=k^{d-1}\omega_{d-1}
\end{equation}
where $\omega_{d-1}$ is the $\mathcal{L}^{d-1}$-measure of the unit ball in $\R^{d-1}$. We now claim that
\begin{equation}\label{eq:homoconv}
\mu_N\to\mu=k^{d-1}\omega_{d-1}1_{Q_1}\mathcal{L}^d\ \text{in $H^{-1}(\R^d)$.}
\end{equation}
A proof of \eqref{eq:homoconv} is given in the appendix.

We now prove that
\begin{equation}\label{eq:homoconvlambda}
\liminf_{N\to\infty} \lambda_\beta(\Omega_N^k) \ge \beta\, k^{d-1}\, \omega_{d-1}.
\end{equation}
To do this, consider the $L^2$-normalized eigenfuntions $u_N$ of $\Omega_N^k$ (we omit the dependence on $k$ to shorten the notation). We clearly prove \eqref{eq:homoconvlambda} under the nonrestrictive assumption that $\sup_{N} \lambda_\beta(\Omega_N^k)<+\infty$.

\medskip
\textbf{Step 1.} We extend $(u_N)_N$ to a bounded sequence $(\tilde{u}_N)_N\subset H^1(Q_1)$. To do so, we construct a vector field $V_N \in C_c^1(Q_1;\R^d)$, which on each periodicity cell is such that
\[
V_N(x) = x-x_{\vec{\ell}} \text{ on } B(x_{\vec{\ell}},r_N), \quad V_N = 0 \text{ outside }B\left(x_{\vec{\ell}},\frac1{2N}\right) .
\]
This vector field can be constructed to have uniformly bounded Lipschitz constant with respect to $N$: indeed, a radial cut-off function between a ball of radius $k/N^{\frac d{d-1}}$ and the one of radius $1/2N$ has derivative of order $N$, while $|x-x_{\vec{\ell}}|\le \frac{\sqrt d}{2N}$ on each cell.
We now use the Rellich-Pohozaev identity

\begin{align*}
\int_{\Omega_N^k} \Big[ (\operatorname{div}V_N)\Big(\tfrac12|\nabla u_N|^2 - \lambda_\beta u_N)\Big)
- (\nabla u_N \otimes \nabla u_N) : \nabla V_N \Big]\,dx\\
= \int_{\partial\Omega_N^k} \Big[ \partial_\nu u_N\,(V_N\cdot\nabla u_N)
- \Big(\tfrac12|\nabla u_N|^2 - \lambda_\beta u_N\Big)(V_N\cdot\nu)\Big]\,\dH,
\end{align*}
which can be proved testing the equation $-\Delta u_N= \lambda_\beta u_N$ with $V\cdot \nabla u_N$ and integrating by parts, also using the equality
\[
\nabla u_N\cdot((D^2 u_N)\,V) = \frac12\,V\cdot\nabla\big(|\nabla u_N|^2\big).
\]
Recall that, on $\partial B(x_{\vec{\ell}}, r_N)$ one has
\[
V_N(x) = (x-x_{\vec{\ell}}), \qquad
V_N\cdot\nu = (x-x_{\vec{\ell}})\cdot\nu = -|x-x_{\vec{\ell}}| = -\frac k{N^{\frac d{d-1}}}.
\]
Furthermore
\[
V_N\cdot\nabla u_N = (x-x_{\vec{\ell}})\cdot\nabla u_N = -\frac k{N^{\frac d{d-1}}}.\,\partial_\nu u_N.
\]
No term on $\partial Q_1$ appears, as $V_N=0$ on $\partial Q_1$. Using Robin boundary conditions $\partial_\nu u_N=-\beta u_N$, the equality then becomes
\begin{align*}
\int_{\Omega_N^k}
\Big[
(\operatorname{div} V_N)\,|\nabla u_N|^{2}
- 2\,(\nabla u_N \otimes \nabla u_N) : \nabla V_N
- \lambda_\beta\,(\operatorname{div} V_N)\,u_N^{2}
\Big]\,dx
=\\
\sum_{\vec{\ell}\in \{1,\dots, N\}^d}\int_{\partial B(x_{\vec{\ell}},r_N)}
-\frac k{N^{\frac d{d-1}}}\Big[
|\nabla_\tau u_N|^{2}
+\frac {\beta^2-\lambda_\beta}{N^{\frac d{d-1}}}\,u_N^{2}
\Big]\,\dH.
\end{align*}
Now, the eigenfunctions are normalized in $L^2$, and we assume the eigenvalues (and hence the Robin form) to be uniformly bounded; furthermore, $V_N$ are bounded in $C^1$. We therefore get the existence of a constant $C$ depending on $d$, $k$, $\sup_N \lambda_\beta (\Omega^k_N)$, and $\beta$ such that
\[
\sum_{\vec{\ell}\in \{1,\dots, N\}^d}\int_{\partial B(x_{\vec{\ell}},r_N)} |\nabla_\tau u_N|^2 \dH \le {C}{N^{\frac d{d-1}}}=\frac{C}{r_N}.
\]
Define $\tilde u_N$ as the harmonic extension inside each ball; it holds the estimate (see for instance \cite[Page 20]{Foc16}, which also hints at the proof of the inequality)
\[
\int_{B(x_{\vec{\ell}},r_N)} |\nabla\tilde u_N|^2 \dx\le \frac{r_N}{d-1}\int_{\partial B(x_{\vec{\ell}},r_N)} |\nabla_\tau  u_N|^2 \dH\,,
\]
whence
\[
\sum_{\vec{\ell}\in \{1,\dots, N\}^d}\int_{B(x_{\vec{\ell}},r_N)} |\nabla \tilde u_N|^2 \dx \le C.
\]
Combining with the bound on $\|\nabla u_N\|_{L^2(\Omega_N^k)}$ yields that $\|\nabla \tilde u_N\|_{L^2(Q_1)}$ is bounded by a constant depending on $d$, $k$, $\sup_N \lambda_\beta (\Omega^k_N)$, and $\beta$. As we also have
\[
\int_{\partial Q_1}u_N^2 \dH \le \frac{\lambda_\beta (\Omega^k_N)}{\beta}\,,
\]
the claim is proved.

\medskip
\textbf{Step 2.} We now consider the weak-$H^1(Q_1)$ limit $u$ of the sequence $\tilde u_N$, which exists up to a subsequence, and prove the inequality
\begin{equation}\label{eq: lsc}
\liminf_{N\to\infty} \int_{Q_1} u_N^2 \, d\mu_N =\liminf_{N\to\infty} \int_{Q_1} \tilde u_N^2 \, d\mu_N 
\ge k^{d-1}\,\omega_{d-1} \int_{Q_1} u^2 \dx.
\end{equation} 
The first equality is trivial, as $u_N=\tilde u_N$ on the support of $\mu_N$. As for the lower semicontinuity inequality, fix $M>0$ and a smooth cutoff $0\le \phi \le 1$. Then
\[
\liminf_{N\to\infty}  \int_{Q_1} \tilde u_N^2 \, d\mu_N \ge 
\liminf_{N\to\infty} \int_{Q_1} \min\{(\phi \tilde u_N)^2, M\} \, d\mu_N.
\]
The truncations $\min\{(\phi \tilde u_N)^2,M\}$ converge weakly in $H^1_0$ to $\min\{(\phi u)^2,M\}$.
Using \eqref{eq:homoconv} we then have:
\[
\lim_{N\to\infty}\int_{Q_1}  \min\{(\phi \tilde u_N)^2, M\} \, d\mu_N=
\int_{Q_1}  k^{d-1} \omega_{d-1} \min\{(\phi u)^2, M\} \dx.
\]
Passing to the limit $\phi \uparrow 1$ and $M \to \infty$ gives \eqref{eq: lsc}.

\medskip
\textbf{Step 3.} We prove \eqref{eq:homoconvlambda}. To this aim, it suffices to notice that, since the functions $\tilde u_N$ converge strongly to $u$ in $L^2(Q_1)$, so do  then $u_N 1_{\Omega^k_N}$ by Vitali's theorem. As the functions $u_N$ are $L^2$-normalized, it must then hold $\|u\|_{L^2(Q_1)}=1$, so that \eqref{eq:homoconvlambda} directly follows from Step 2.

\medskip
\textbf{Step 4.} We finally prove $M_1\ge 1$. To this, we first consider a sequence $t^k_N\downarrow 1$ so that $|t^k_N \Omega^k_N|=1$ and observe that by the scaling property \eqref{eq: scaling} we deduce
\begin{equation}\label{eq: liminf}
\lambda_\beta(t^k_N \Omega^k_N)\ge \frac{1}{(t^k_N)^2}\lambda_\beta(\Omega^k_N)\,.
\end{equation}
As for the torsion part, we trivially estimate with the test function $w=1$, and together with \eqref{eq: liminf} and \eqref{eq: totalmass} we get
\[
F_\beta(t^k_N \Omega^k_N)
\ge 
\frac{1}{(t^k_N)^{d+1}}
\frac{\lambda_\beta(\Omega^k_N)}
{\beta \left( \mathcal{H}^{d-1}(\partial Q_1) + k^{\,d-1}\omega_{d-1} \right)}
\]
With the liminf as $N\to +\infty$, using \eqref{eq:homoconvlambda}, we have
\[
M_1 \ge \frac{\beta k^{\,d-1}\omega_{d-1}}
{\beta \left( \mathcal{H}^{d-1}(\partial Q_1) + k^{\,d-1}\omega_{d-1} \right)}
\]
which entails the conclusion when $k\to +\infty$.
\end{proof}

\begin{rema}\label{gamma}
An alternative proof of the inequality $M_1\ge1$ can be obtained by means of a $\gamma$-closure argument; we give a short sketch here below.\\
We first observe that, since both $\lambda_\beta$ and $T_\beta$ are continuous with respect to the $\gamma$-convergence, we also have
$$M_1=\sup\big\{F_1(\mu)\ :\ \mu\in\M\big\}$$
where $\M$ denotes the $\gamma$-closure of the class of open sets $\O$ with $|\O|=1$. By approximating a smooth surface $\partial\O$ by means of very rough surfaces, as in Figure \ref{frast}, we obtain the the class $\M$ contains all the problems with a Dirichlet condition on $\partial\O$, hence also the class of $\gamma$-limits of Dirichlet problems. The $\gamma$-closure of Dirichlet problems has been fully identified in \cite{DMM87} as the class of capacitary measures, that is the measures that vanish on all sets of capacity zero. Then we have that $\M$ contains all the capacitary measures.

In fact the class $\M$ is strictly larger than the class of capacitary measures. To see this one could consider the case of smooth domains converging to a domain with a (possibly smooth) fracture (see, for instance, \cite{BG15,BG19,BGT22,CG24}). More precisely, take a smooth surface $S$ in an open set $\O$, an $\eps$ neighborhood $S_\eps$, and $\O_\eps=\O\setminus S_\eps$. We have that the Robin energies on the sets $\O_\eps$
$$\frac12\int_{\O_\eps}|\nabla u|^2\,dx+\beta\int_{\partial\O_\eps}u^2\,d\HH^{d-1}$$
$\Gamma$-converge (as $\eps\to0$) to the energy
$$\frac12\int_{\O\setminus S}|\nabla u|^2\,dx+\beta\int_{\partial\O}u^2\,d\HH^{d-1}+\beta\int_S \left(u_+^2+u_-^2\right)\,d\HH^{d-1},$$
which cannot be represented by a capacitary measure. In the expression above $u_-$ and $u_+$ are the left and right traces of the Sobolev function $u$.

\begin{figure}[h!]
\centering
{\includegraphics[scale=1.2]{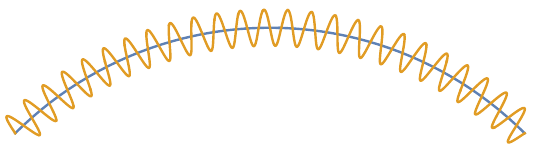}}
\caption{Approximation of a smooth boundary by means of rough boundaries.}
\label{frast}
\end{figure}

In particular, the $\gamma$-closure of Robin problems contains all the capacitary measures $c\,dx$ with $c>0$. Then, taking as $\O$ the ball $B$, we have that
$$M_1\ge\lambda_\beta(B,c)\,T_\beta(B,c)$$
for every constant $c>0$, where
$$\lambda_\beta(B,c)=\min\Bigg\{\frac{\int_B|\nabla u|^2\,dx+\beta\int_{\partial B}u^2\,d\HH^{d-1}+c\int_B u^2\,dx}{\int_B u^2\,dx}\ :\ u\in H^1(B)\setminus\{0\}\Bigg\}$$
and
$$T_\beta(B,c)=\max\Bigg\{\frac{\big(\int_B u\,dx\big)^2}{\int_B|\nabla u|^2\,dx+\beta\int_{\partial B}u^2\,d\HH^{d-1}+c\int_B u^2\,dx}\ :\ u\in H^1(B)\setminus\{0\}\Bigg\}.$$
From the expressions above we obtain
$$\lambda_\beta(B,c)=\lambda_\beta(B)+c$$
and
$$T_\beta(B,c)\ge\frac{1}{\beta\mathcal{H}^{d-1}(\partial B)+c}.$$
Therefore, we have
$$M_1\ge\frac{\lambda_\beta(B)+c}{\beta\mathcal{H}^{d-1}(\partial B)+c}$$
for every $c>0$, and the inequality $M_1\ge1$ now follows by passing to the limit as $c\to+\infty$.
\end{rema}

\section{Open questions and remarks}\label{sopenq}

In this section we raise a couple of open and challenging questions in the case $\beta<+\infty$ of Robin boundary conditions. Because of the lack of scaling property for the functional $F_q$, the proofs available in the Dirichlet case cannot be easily extended, and we believe it would be interesting to prove (or disprove) them.

The first question is concerned with the Kohler-Jobin inequality: in the Dirichlet case it reads as
$$m_q=F_q(B)\qquad\text{for every }q\le2/(d+2).$$
We have seen in Theorem \ref{mqb} that in the Robin case $\beta<+\infty$ the threshold (if any) cannot be the same because $m_q=0$ for every $q>1/(d+1)$. So the first question is:

\begin{itemize}
\item[(Q1)] {\it in the Robin case $\beta<+\infty$, is it true that for every $q\le1/(d+1)$, or at least for $q$ below a smaller threshold, the Kohler-Jobin inequality
$$m_q=F_q(B)$$
holds?}
\end{itemize}

A positive answer to the question (Q1) above would be rather natural since as $q\to0$, the term $T_\beta^q$ in the expression of the functional $F_q$, becomes less and less relevant, and the ball $B$ actually minimizes the limit case $q=0$, where $F_q$ simply reduces to $\lambda_\beta$, as recalled in Section \ref{spreli}.

The second question is concerned with a possible analogous of Theorem \ref{Mqinfty} for $M_q$ when $q$ is large enough (reverse Kohler-Jobin inequality). We have seen that in the Dirichlet case $\beta=+\infty$ we have
$$M_q=F_q(B)$$
for every $q$ above a suitable threshold $\bar q$. So, a natural question is:

\begin{itemize}
\item[(Q2)] {\it in the Robin case $\beta<+\infty$, is it true that for every $q$ above a suitable threshold $\bar{q}_\beta$ the reverse Kohler-Jobin inequality
$$M_q=F_q(B)$$
holds?}
\end{itemize}

Again, a positive answer to the question (Q2) above would be rather natural since as $q\to+\infty$ the term $\lambda_\beta$ in the expression of the functional $F_q$, becomes less and less relevant, and the ball $B$ actually maximizes the limit case $q=+\infty$, where $F_q$ simply reduces to $T_\beta$, as recalled in Section \ref{spreli}.


\bigskip

\noindent{\bf Acknowledgments.} The authors are member of the Gruppo Nazionale per l'Analisi Matematica, la Probabilit\`a e le loro Applicazioni (GNAMPA) of the Istituto Naziona\-le di Alta Matematica (INdAM). S.C. has been partially supported by the project "Stochastic Modeling of Compound Events" Funded by the Italian Ministry of University and Research (MUR) under the PRIN 2022 PNRR (cod. P2022KZJTZ). F.S. has been partially supported by the project "Variational Analysis of Complex Systems in Material Science, Physics and Biology" Funded by the Italian Ministry of University and Research (MUR) under the PRIN 2022 (cod. 2022HKBF5C). PRIN projects are part of PNRR Italia Domani, financed by European Union through NextGenerationEU.

\section*{Appendix}

The following appendix is devoted to the proof of \eqref{eq:homoconv}.

\begin{lemm}[Surface measure homogenization]
The measures $\mu_N$ converge in $H^{-1}(\R^d)$ to
\[
\mu = k^{d-1}\,\omega_{d-1} \,\mathcal{L}^d
\]
where $\omega_{d-1}$ is the area of the unit $(d-1)$-sphere.
\end{lemm}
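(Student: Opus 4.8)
The plan is to reduce the statement to an $H^1$-convergence of Bessel-type potentials. Denote by $\mathcal{G}$ the kernel of $(-\Delta+1)^{-1}$ on $\R^d$, so that $\|\nu\|_{H^{-1}(\R^d)}=\|\mathcal{G}*\nu\|_{H^1(\R^d)}$ and $\|\nu\|^2_{H^{-1}(\R^d)}=\int(\mathcal{G}*\nu)\,d\nu$ for $\nu\in H^{-1}(\R^d)$; recall $0<\mathcal{G}(z)\le C_d(|z|^{2-d}+1)$ for $d\ge 3$ (resp. $C_2(\log^+|z|^{-1}+1)$ for $d=2$), that $\mathcal{G}$ is radially decreasing near the origin with an $\R^d$-integrable singularity and decays exponentially at infinity, and that $|D^2\mathcal{G}(z)|\lesssim|z|^{-d}$. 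Set $v_N:=\mathcal{G}*\mu_N$ and $v:=\mathcal{G}*\mu=k^{d-1}\omega_{d-1}\,\mathcal{G}*1_{Q_1}$, the latter being a fixed bounded continuous function; then the lemma is equivalent to $v_N\to v$ strongly in $H^1(\R^d)$. Throughout one writes $v_N=\sum_{\vec\ell}v_{\vec\ell,N}$ with $v_{\vec\ell,N}:=\mathcal{G}*\big(\mathcal{H}^{d-1}\mres_{\partial B(x_{\vec\ell},r_N)}\big)$, and uses repeatedly that $r_N^{d-1}N^d=k^{d-1}$ while $N^dr_N^d=k^dN^{-d/(d-1)}\to0$.

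First I would establish a priori bounds and identify the weak limit. Each single-sphere potential is bounded, $\|v_{\vec\ell,N}\|_{L^\infty}\le Cr_N(1+\log^+r_N^{-1})$; using this for the $O(1)$ spheres within distance $1/N$ of a given $x$, and $\mathcal{G}(x-y)\lesssim\mathcal{G}(|x-x_{\vec\ell}|)$ on the remaining spheres together with the uniform lattice bound $\sum_{\vec\ell:\,|x-x_{\vec\ell}|\ge 1/(2N)}\mathcal{G}(|x-x_{\vec\ell}|)\le CN^d$ (a shell-by-shell estimate), one gets $\sup_N\|v_N\|_{L^\infty(\R^d)}\le C$. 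Hence $\|v_N\|^2_{H^1(\R^d)}=\int v_N\,d\mu_N\le\|v_N\|_{L^\infty}\mu_N(\R^d)\le C$ by \eqref{eq: totalmass}, so up to subsequences $v_N\rightharpoonup v_\infty$ in $H^1(\R^d)$ and $v_N\to v_\infty$ in $L^2_{\mathrm{loc}}$. Testing $-\Delta v_N+v_N=\mu_N$ against $\phi\in C^\infty_c(\R^d)$ and Taylor expanding $\phi$ on each sphere, the curvature correction is $O(N^dr_N^d)\to0$ and only the Riemann sum $k^{d-1}\omega_{d-1}N^{-d}\sum_{\vec\ell}\phi(x_{\vec\ell})\to k^{d-1}\omega_{d-1}\int_{Q_1}\phi$ survives; thus $v_\infty=v$, the whole sequence converges weakly, and $\int_{Q_1}(v_N-v)\dx\to0$.

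The core of the proof is to upgrade this to strong convergence. Since $v_N-v=(-\Delta+1)^{-1}(\mu_N-\mu)$,
\[
\|\mu_N-\mu\|^2_{H^{-1}(\R^d)}=\int(v_N-v)\,d\mu_N-k^{d-1}\omega_{d-1}\int_{Q_1}(v_N-v)\dx,
\]
and the last term vanishes, so everything reduces to $\int(v_N-v)\,d\mu_N\to0$. On each sphere $\partial B(x_{\vec\ell},r_N)$ I split $v_N=v_{\vec\ell,N}+\sum_{\vec\ell'\neq\vec\ell}v_{\vec\ell',N}$. The \emph{self} contribution is $\le\sum_{\vec\ell}\mathcal{H}^{d-1}\!\big(\partial B(x_{\vec\ell},r_N)\big)\|v_{\vec\ell,N}\|_{L^\infty}\le C\,N^dr_N^d(1+\log^+r_N^{-1})\to0$ — here is where the scaling $r_N=kN^{-d/(d-1)}$ is essential: each sphere measure carries squared $H^{-1}$-energy of order $r_N^d$, so the total self-energy is negligible and the whole $H^1$-mass of $v_N$ must come from pairwise interactions. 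For the remaining part one proves that, uniformly in $\vec\ell$ and in $y\in\partial B(x_{\vec\ell},r_N)$,
\[
\sum_{\vec\ell'\neq\vec\ell}v_{\vec\ell',N}(y)=\omega_{d-1}r_N^{d-1}\!\!\sum_{\vec\ell'\neq\vec\ell}\mathcal{G}(y-x_{\vec\ell'})+o(1)=k^{d-1}\omega_{d-1}\!\int_{Q_1}\!\mathcal{G}(y-z)\,dz+o(1)=v(y)+o(1),
\]
the first equality because replacing the integral over a far sphere by $\mathcal{H}^{d-1}(\partial B(x_{\vec\ell'},r_N))\,\mathcal{G}(y-x_{\vec\ell'})$ costs $\lesssim r_N^{d+1}|y-x_{\vec\ell'}|^{-d}$ per sphere (summing over $\vec\ell'$ to $\lesssim r_N^{d+1}N^d\log N\to0$) and the excluded term is $N^{-d}\mathcal{G}(y-x_{\vec\ell})\lesssim N^{-d}r_N^{2-d}\to0$, while the second is uniform Riemann-sum convergence, the only delicate point being the integrable singularity of $\mathcal{G}$, controlled by the uniform bound $N^{-d}\sum_{0<|x_{\vec\ell'}-y|<\delta}\mathcal{G}(y-x_{\vec\ell'})\lesssim\delta^2(1+\log^+\delta^{-1})$. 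Consequently this part is $\le o(1)\,\mu_N(\R^d)\to0$, and $\int(v_N-v)\,d\mu_N\to0$ follows.

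The main obstacle is precisely the last step: plain weak $H^1$-convergence of $v_N$ does not by itself give strong convergence — this is the familiar homogenization gap in which a ``strange term'' could survive — and one has to exploit the specific geometry. The decisive point is that $r_N\sim N^{-d/(d-1)}$ lies strictly above the Dirichlet-capacitary threshold ($N^{-d/(d-2)}$ for $d\ge3$, exponentially small for $d=2$): this is what makes the self-energy of each perforation vanish, so that no Dirichlet-type strange term appears and the surface measures homogenize to the volume density $k^{d-1}\omega_{d-1}\,1_{Q_1}$. The case $d=2$ is handled by the same scheme with $\mathcal{G}(z)\sim-\tfrac1{2\pi}\log|z|$ near the origin, all of the above estimates remaining valid up to harmless logarithmic corrections.
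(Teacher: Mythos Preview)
Your argument is correct and follows essentially the same route as the paper's: both represent $\|\mu_N-\mu\|_{H^{-1}}^2$ as a potential--theoretic self--pairing and then split into a diagonal (self--energy) part, which vanishes because $N^dr_N^d\to0$, and an off--diagonal part, which converges to the limit via Riemann sums for the locally integrable kernel. The organizational differences are minor but worth noting. You work with the Bessel kernel $\mathcal{G}=(-\Delta+1)^{-1}$, which matches the $H^{-1}$ norm exactly; the paper uses the Newtonian potential $G=(-\Delta)^{-1}$ and invokes $\nu_N(\R^d)=0$ to handle the low--frequency singularity of $|\xi|^{-2}$ --- your choice is slightly cleaner here. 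You also insert an intermediate step (uniform $L^\infty$ bound on $v_N$ and weak $H^1$--convergence) to dispose of the cross term $\int_{Q_1}(v_N-v)\,dx$, where the paper just uses narrow convergence of $\mu_N$ against the $C^1$ function $G*\mu$; these are equivalent. Finally, the paper introduces a truncated kernel $G_\eps$ and lets $\eps\to0$ after $N\to\infty$, while you split directly into $\vec\ell=\vec\ell'$ and $\vec\ell\neq\vec\ell'$ and prove the stronger pointwise statement $\sum_{\vec\ell'\neq\vec\ell}v_{\vec\ell',N}(y)=v(y)+o(1)$; the underlying estimates (second--order replacement of the sphere integral by its center value, shell--by--shell control of the singular Riemann sum) are the same in both. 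Your closing remark on the capacitary threshold is a nice piece of context that the paper does not make explicit.
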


\begin{proof}
For the sake of brevity, we set 
\[
\nu_N = \mu_N-\mu.
\]
A direct computation shows that $\nu_N$ tends to 0 narrowly; our aim is to show that $\|\nu_N\|_{H^{-1}(\R^d)}\to 0$. By construction it holds $\nu_N(\R^d)=0$. We recall that the $H^{-1}$ norm can be expressed as Fourier integral via
\[
\|\nu_N\|_{H^{-1}(\R^d)}^2=\int_{\R^d}\frac{|\widehat{\nu_N}|^2}{1+|\xi|^2}\;d\xi.
\]
Let us notice that $\widehat{\nu}_N\in C^\infty$ and $\widehat{\nu}_N(0)=0$. In particular, the ratio $\frac{\widehat{\nu}_N(\xi)}{|\xi|}$ is bounded as $\xi\to 0$. Now, denoting by $G$ the Green function of $-\Delta$ in $\R^d$  and recalling that $\widehat{G}=\frac{1}{|\xi|^2}$, we have
\begin{equation}\label{eq: h-1homogen}
\int_{\R^d}\frac{|\widehat{\nu}_N|^2(\xi)}{|\xi|^2}\;d\xi=\langle \widehat{G}\,\widehat{\nu}_N,\overline{\widehat{\nu}}_N\rangle
\end{equation}
where $\langle \cdot, \cdot \rangle$ denotes the  duality product between functions and measures. We thus get the thesis if the last right hand side tends to zero; equivalently, passing to the antitransforms, we prove \footnote{for a rigorous justification of this anti-tranformation, one may first consider mollifiers $\varrho_\varepsilon$ and apply the Parseval identity to the duality between  $G*(\varrho_\varepsilon*\nu_N)$, regarded as a tempered distribution, and the Schwartz function $\varrho_\varepsilon*\nu_N$; additionally,  the formula for the Fourier transform of a convolution is justified in this setting, and, since $\widehat{\varrho_\varepsilon * \nu_N}$ is pointwise dominated by $\widehat{\nu}_N$, one may pass to the limit in the left-hand side of \eqref{eq: h-1homogen}.}  that $\langle G*\nu_N,\nu_N\rangle\to 0$.
This latter limit can be splitted as follows:
$$\langle G*\mu_N,\mu_N\rangle-2\langle G*\mu,\mu_N\rangle+\langle G*\mu,\mu\rangle.$$
As $G*\mu$ is a $C^1$ function by elliptic regularity, and using the narrow convergence of $\mu_N$ to $\mu$ (and their compact support), it suffices to show that
\begin{equation}\label{eq: aim}
\lim_{N\to +\infty}\langle G*\mu_N,\mu_N\rangle=\langle G*\mu,\mu\rangle\,.
\end{equation}
To this aim, let us define, for any $\varepsilon>0$, the continuous functions
$$
G_\varepsilon(x):=\left[G(x)-G(\varepsilon)\right]1_{\left\{|x|\ge\varepsilon\right\}}.
$$
where, with a small abuse of notation, we denoted $G(|x|)=G(x)$ since $G$ is radial. For any fixed $\varepsilon>0$ it holds
\begin{equation}\label{eq: conv1}
\lim_{N\to +\infty}\left|\langle G_\varepsilon*\mu_N,\mu_N\rangle-\langle G_\varepsilon*\mu,\mu\rangle\right|= 0
\end{equation}
since the sequence $(G_\varepsilon*\mu_N)_N$ converges to $G_\varepsilon*\mu$ uniformly. Moreover, by monotone convergence, one gets
\begin{equation}\label{eq: conv2}
\lim_{\varepsilon\to 0}\left|\langle G_\varepsilon*\mu,\mu\rangle-\langle G*\mu,\mu\rangle\right|=0
\end{equation}
It further holds
\begin{align*}
\left|\langle G_\varepsilon*\mu_N,\mu_N\rangle-\langle G*\mu_N,\mu_N\rangle\right|\le & 2\int_{\left\{|x-y|\le\varepsilon\right\}}G(x-y)\, d\mu_N(x)\, d\mu_N(y)\\
=2\sum_{\vec{\ell}}\int_{\left(\partial B(x_{\vec{\ell}}, r_N)\times\partial B(x_{\vec{\ell}}, r_N)\right)\cap\left\{|x-y|\le\varepsilon\right\}}& G(x-y) \dH(x)\dH(y)\\
+2\sum_{\vec{\ell}\neq \vec{p}} \int_{\left(\partial B(x_{\vec{\ell}}, r_N)\times\partial B(x_{\vec{p}}, r_N)\right)\cap\left\{|x-y|\le\varepsilon\right\}}& G(x-y) \dH(x)\dH(y)
\end{align*}

Let us split the analysis into off-diagonals and diagonal terms. 
We only discuss here the case $d\ge 3$. Concerning the off diagonal terms, we observe that, for any $i\neq j$
$$|x-y|=|(x-x_i)-(y-x_j)+(x_{\vec{\ell}}-x_{\vec{p}})|\ge|x_{\vec{\ell}}-x_{\vec{p}}|-2r_N\ge\frac{1}{N}-\frac{k}{N^{\frac d{d-1}}},$$
thus, for $N$ sufficiently large,
$$2\varepsilon\ge 2|x-y|\ge|x_{\vec{\ell}}-x_{\vec{p}}|\ge\frac{1}{N}.$$
We thus get
\begin{align*}
\sum_{\vec{\ell}\neq \vec{p}} &\int_{\partial B(x_{\vec{\ell}}, r_N)} \int_{\partial B(x_{\vec{p}}, r_N)} G(x-y) 1_{\left\{|x-y|\le\varepsilon\right\}}\dH(x)\dH(y)\\
&\le 2^{d-2} \sum_{\vec{\ell}\neq \vec{p}} 1_{\left\{|x_{\vec{\bar{\ell}}}-x_{\vec{p}}|\le2\varepsilon\right\}}\int_{\partial B(x_{\vec{\ell}}, r_N)} \int_{\partial B(x_{\vec{p}}, r_N)} \frac{1}{|x_{\vec{\ell}}-x_{\vec{p}}|^{d-2}}\dH(x)\dH(y)\\
&\le\frac{2^{d-2}k^{d-1}}{N^{2d}} \sum_{\vec{\ell}\neq \vec{p}}\frac{1_{\left\{|x_{\vec{\ell}}-x_{\vec{p}}|\le2\varepsilon\right\}}}{|x_{\vec{\ell}}-x_{\vec{p}}|^{d-2}}=\frac{C}{N^d}\sum_{\vec{\ell}}\left(\frac{1}{N^d}\sum_{\vec{p}}\frac{1_{\left\{|x_{\vec{\ell}}-x_{\vec{p}}|\le2\varepsilon\right\}}}{|x_{\vec{\ell}}-x_{\vec{p}}|^{d-2}}\right)\\
&{\le}\frac{C}{N^d}\sum_{\vec{p}}\frac{1_{\left\{|x_{\vec{\bar{\ell}}}-x_{\vec{p}}|\le2\varepsilon\right\}}}{|x_{\vec{\bar{\ell}}}-x_{\vec{p}}|^{d-2}}.
\end{align*}
for an arbitrary choice of ${\vec{\bar{\ell}}}\in \{1,\dots N\}^d$, since the second sum gives by simmetry a result which is independent of ${\vec{\bar{\ell}}}$.  Now, the right-hand side above can be seen as a Riemann sum as $N\to +\infty$ for the integrable function $\frac1{|x_{\vec{\bar{\ell}}}-x|^{d-2}}$, so that we get
$$
\frac{1}{N^d}\sum_{\vec{p}}\frac{1_{\left\{|x_{\vec{\bar{\ell}}}-x_{\vec{p}}|\le2\varepsilon\right\}}}{|x_{\vec{\bar{\ell}}}-x_{\vec{p}}|^{d-2}}\to\int_{\left\{|x-x_{\vec{\bar{\ell}}}|\le 2\varepsilon\right\}}\frac{1}{|x-x_{\vec{\bar{\ell}}}|^{d-2}}\,dx=C\varepsilon^2.
$$
On the other hand, when $N$ is large enough, with the change of variables $x=x_{\vec{\bar{\ell}}}+r_N u$, $y=x_{\vec{\bar{\ell}}}+r_N v$ the diagonal terms can be estimated with
$$\frac{1}{N^d}\int_{\mathbf{S}^{d-1}\times\mathbf{S}^{d-1}}\frac{1}{|u-v|^{d-2}}\dH(u)\dH(v)\to 0,$$
as the integral is finite thanks, for instance, to \cite[Theorem 9.7]{LL01}.

Putting together the above estimates with \eqref{eq: conv1} and \eqref{eq: conv2}, we get \eqref{eq: aim} by first letting $N \to +\infty$ and then $\varepsilon \to 0$, and this concludes the proof.
\end{proof}

\bigskip

\bigskip\bigskip\small\noindent
Giuseppe Buttazzo: Dipartimento di Matematica, Universit\`a di Pisa\\
Largo B. Pontecorvo 5, 56127 Pisa - ITALY\\
{\tt giuseppe.buttazzo@unipi.it}\\
{\tt http://www.dm.unipi.it/pages/buttazzo/}

\bigskip\small\noindent
Simone Cito: Dipartimento di Matematica e Fisica, Universit\`a del Salento\\
Via Arnesano, 73100 Lecce - ITALY\\
{\tt simone.cito@unisalento.it}\\
{\tt https://www.unisalento.it/people/simone.cito}

\bigskip\small\noindent
Francesco Solombrino: Dipartimento di Scienze e Tecnologie Biologiche ed Ambientali, Universit\`a del Salento\\
Via Arnesano, 73100 Lecce - ITALY\\
{\tt francesco.solombrino@unisalento.it}\\
{\tt https://www.unisalento.it/people/francesco.solombrino}

\end{document}